\theoremstyle{plain}
  \newtheorem{thm}{Theorem}
  \newtheorem{cor}[thm]{Corollary}
  \newtheorem{prop}[thm]{Proposition}
\theoremstyle{definition}
\theoremstyle{remark}
  \newtheorem{rem}[thm]{Remark}
\numberwithin{equation}{section}
\newcommand{\field}[1]{\mathbb{#1}}
\newcommand{\R}{\field{R}}
\begin{document}

\title[Estimate of isodiametric constant]
{Estimate of isodiametric constant\\
for closed surfaces}

\thanks{The author is partially supported by a Grant-in-Aid
for Scientific Research from the Japan Society for the Promotion of Science.}

\begin{abstract}
  We give an explicit estimate of the area of a closed surface by the diameter and
  a lower bound of curvature.
  This is better than Calabi-Cao's estimate \cite{CC:clgeod}
  for a nonnegatively curved two-sphere.
\end{abstract}

\author{Takashi Shioya}

\address{Mathematical Institute, Tohoku University, Sendai 980-8578,
  JAPAN}

\date{\today}

\keywords{A. D. Alexandrov's conjecture, isodiametric inequality, area, diameter, curvature}

\subjclass[2010]{Primary 53C20}


\maketitle

\section{Introduction}
\label{sec:intro}

We are interested in the relation between the area $V$ and the diameter $D$
of a closed two-dimensional Riemannian manifold $M$ with
a lower bound of curvature $\kappa$.
It is a famous conjecture due to A. D. Alexandrov \cite{A:conv}
that, if $\kappa = 0$, then
\[
\frac{V}{D^2} \le \frac{\pi}{2}
\]
and the equality holds only if $M$ is the double of a flat disk.
This conjecture seems to be very hard to solve.
Since Alexandrov raised the conjecture in 1955,
there were several attempts to approach it
(see \cite{CC:clgeod, FK:isodiam, KM:alexconj, Sk:isodiam, Sy:diam-area, Z:conj}),
but it is still open.
A trivial upper bound of $V/D^2$ is $\pi$, which follows from the Bishop
volume comparison theorem.
For a (not necessarily zero) lower bound $\kappa$ of curvature, we have
$V \le v_\kappa(D)$, where $v_\kappa(r)$ denotes the area of an $r$-ball
in a complete simply connected two-dimensional space form
with constant curvature $\kappa$, i.e.,
\[
v_\kappa(r) =
\begin{cases}
  \pi r^2 & \text{if $\kappa = 0$}, \\
  2\pi\kappa^{-1}(1-\cos\sqrt{\kappa}\,r) & \text{if $\kappa > 0$}, \\
  2\pi\kappa^{-1}(1-\cosh\sqrt{-\kappa}\,r) & \text{if $\kappa < 0$}.
\end{cases}
\]
Let $R$ be the \emph{radius of $M$}, i.e.,
\[
R := \inf_{p\in M} \sup_{q \in M} d(p,q).
\]
Note that $D/2 \le R \le D$.
The Bishop volume comparison theorem also implies
\begin{equation} \label{eq:VR}
V \le v_\kappa(R).
\end{equation}
This gives a good estimate of $V/D^2$
only in the case where $R$ is far less than $D$.
In this paper, we give a new estimate of $V/D^2$
which is effective for $R$ close to $D$ (see Proposition \ref{prop:VDR}).
Combining this with \eqref{eq:VR},
we obtain the following main theorem.

\begin{thm} \label{thm:VD}
  Let $M$ be a two-dimensional closed Riemannian manifold with
  area $V$, diameter $D$, Euler number $\chi$,
  and with a lower Gaussian curvature bound $\kappa$,
  and let $k := \kappa D^2$.
  We define
  \begin{align*}
    w(k) &:= \frac{v_k(1)}{\pi} =
    \begin{cases}
      1 & \text{if $k = 0$},\\
      2 k^{-1}(1-\cos\sqrt{k}) & \text{if $k > 0$},\\
      2 k^{-1}(1-\cosh\sqrt{-k}) & \text{if $k < 0$},
    \end{cases} \\
    \lambda_\chi(k) &:= 2\chi - k\,w(k), \\
    \alpha_\chi(k) &:=
    \begin{cases}
      w(k) + 360^{-1}k\,\lambda_\chi(k) & \text{if $k \ge 0$},\\
      w(k)^2 & \text{if $k < 0$}.
    \end{cases}
  \end{align*}
  Then we have
  \[
  \frac{V}{D^2} \le v_k\left( \sqrt{
  \frac{-6\,w(k) + 6\sqrt{w(k)^2 + 3^{-1} \lambda_\chi(k)\,\alpha_\chi(k)}}
    {\lambda_\chi(k)}
   } \right),
  \]
  provided $\lambda_\chi(k) > 0$.
\end{thm}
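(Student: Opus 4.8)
The plan is to read the main theorem as the result of intersecting two upper bounds for $V$: the Bishop bound \eqref{eq:VR}, which is sharp when $R$ is small, and the estimate of Proposition \ref{prop:VDR}, which is sharp as $R\to D$. First I record two elementary consequences of the definitions: the scaling identity $v_\kappa(R)=D^2\,v_k(R/D)$ and the closed form $v_k(r)=\pi r^2\,w(kr^2)$. Writing $t:=(R/D)^2\in[1/4,1]$ (the range coming from $D/2\le R\le D$), inequality \eqref{eq:VR} reads
\[
\frac{V}{D^2}\le v_k\!\left(\frac RD\right)=\pi\,t\,w(kt),
\]
a function of $t$ that is increasing on the admissible range (for $k>0$ one has $\sqrt k\le\pi$ by the Bonnet--Myers theorem, so $v_k$ is monotone up to radius $1$). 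The complementary input is Proposition \ref{prop:VDR}, which I use as a bound $V/D^2\le\Phi(t)$ with $\Phi$ \emph{decreasing} in $t$; this is precisely the estimate advertised as effective for $R$ close to $D$.

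Let $t_\ast$ denote the quantity under the outer square root in the statement, and let $\tau:=(R/D)^2$ be the (unknown) value attached to the actual surface $M$. I would argue by the trivial dichotomy $\tau\le t_\ast$ or $\tau\ge t_\ast$. In the first case the increasing Bishop bound gives $V/D^2\le\pi\tau\,w(k\tau)\le\pi t_\ast\,w(kt_\ast)=v_k(\sqrt{t_\ast})$. In the second case the decreasing bound of Proposition \ref{prop:VDR} gives $V/D^2\le\Phi(\tau)\le\Phi(t_\ast)$, so it suffices to know the single numerical inequality
\[
\Phi(t_\ast)\le v_k\!\left(\sqrt{t_\ast}\right).
\]
Thus the entire theorem is reduced to locating a value $t_\ast$ at which this key inequality holds, after which either branch of the dichotomy yields the claimed bound.

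To pin down $t_\ast$ I replace the transcendental factor $w(kt)$ on the right-hand side by the one-sided polynomial bounds coming from its Taylor series $w(x)=1-\tfrac{x}{12}+\tfrac{x^2}{360}-\cdots$. The inequality $w(x)\ge 1-x/12$ holds for every $x$, while the finer comparison with $1-x/12+x^2/360$ switches direction according to the sign of $x$; this is exactly the dichotomy responsible for the two cases of $\alpha_\chi(k)$, and the constant $1/360$ is the third Taylor coefficient. Carrying out this substitution turns the condition $\Phi(t_\ast)=v_k(\sqrt{t_\ast})$ into the quadratic
\[
\lambda_\chi(k)\,t^2+12\,w(k)\,t-12\,\alpha_\chi(k)=0,
\]
equivalently $w(k)\,t+\tfrac1{12}\lambda_\chi(k)\,t^2=\alpha_\chi(k)$. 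Its product of roots equals $-12\,\alpha_\chi(k)/\lambda_\chi(k)<0$, so once $\lambda_\chi(k)>0$ there is a unique positive root
\[
t_\ast=\frac{-6\,w(k)+6\sqrt{w(k)^2+\tfrac13\,\lambda_\chi(k)\,\alpha_\chi(k)}}{\lambda_\chi(k)},
\]
which is precisely the argument appearing under $v_k$; substituting it into $v_k(\sqrt{t_\ast})$ produces the stated inequality.

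The hard part is twofold. The substantial geometric content is Proposition \ref{prop:VDR} itself: producing a genuinely decreasing bound $\Phi$ requires analysing the distance function from a radius-realizing center together with Gauss--Bonnet and the total curvature $\int_M K=2\pi\chi$, and it is here that the Euler number enters through $\lambda_\chi(k)=2\chi-k\,w(k)$. The second, more technical, obstacle is to verify that the one-sided polynomial estimates for $w$ are applied in the direction that makes $\Phi(t_\ast)\le v_k(\sqrt{t_\ast})$ an honest consequence (so that passing from the exact $w(kt)$ to its Taylor surrogate only weakens the bound), and to confirm that the root $t_\ast$ indeed lies in $[1/4,1]$, so that the dichotomy is genuine rather than vacuous at an endpoint. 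I would treat the sign bookkeeping separately for $k\ge 0$ and $k<0$, matching the two formulas for $\alpha_\chi(k)$, and check consistency at $k=0$, where $w(0)=1$, $\alpha_\chi(0)=1$, $\lambda_\chi(0)=2\chi$, and the quadratic collapses to $\chi t^2+6t-6=0$.
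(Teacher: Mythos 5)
Your overall architecture is the paper's own: intersect the Bishop bound \eqref{eq:VR} with the bound of Proposition \ref{prop:VDR}, exploit that one is increasing and the other decreasing in the radius, and convert the threshold condition into the quadratic $\tfrac{1}{12}\lambda_\chi(k)t^2+w(k)t-\alpha_\chi(k)\le 0$ by Taylor-type estimates; your dichotomy on $\tau$ versus $t_\ast$ is only a cosmetic rearrangement of the paper's argument that the supremum of the minimum is attained at the crossing point, which is then bounded above by $t_\ast$. For $k\ge 0$ your deferred verification does close, essentially as you indicate: writing $f:=\tilde v_k/\pi$ and using $v_k(r)/\pi=r^2-k\,f(r)$, the key inequality $\Phi(t_\ast)\le v_k(\sqrt{t_\ast})$ is equivalent to $w(k)(1-t_\ast)\le\lambda_\chi(k)\,f(\sqrt{t_\ast})$ --- note that the $f$-terms from both sides combine with coefficient $\lambda_\chi(k)$, so \emph{only lower} bounds on $f$ are ever needed --- and this follows from $f(r)\ge r^4/12-k\,r^6/360$ (your $w(x)\ge 1-x/12$, integrated twice) together with $t_\ast\le 1$, which holds because the quadratic is positive at $t=1$ (its value is $\lambda_\chi(k)(30-k)/360$ and $k\le\pi^2<30$ by Bonnet--Myers). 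Your worry about $t_\ast\ge 1/4$ is unfounded: the dichotomy is exhaustive for any $t_\ast$, and only the upper bound $t_\ast\le 1$ matters, and only when $k>0$.

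The genuine gap is in the case $k<0$, where the mechanism you propose is wrong. You attribute the formula $\alpha_\chi(k)=w(k)^2$ to the direction-switch of the comparison between $w(x)$ and $1-x/12+x^2/360$; that switch is a true fact, but it is irrelevant here (for $x<0$ the finer polynomial is still a \emph{lower} bound for $w$, i.e.\ it points the useful way, and in any case the proof for $k<0$ uses only the crude bound $f(r)\ge r^4/12$). What actually produces $w(k)^2$ is the different, \emph{fractional} form of Proposition \ref{prop:VDR} when $\kappa<0$: one bounds the denominator $1-k\,\tilde v_k(r)/v_k(1)\ge 1$ to pass from the threshold equation to $\pi-2\pi\chi\,\tilde v_k(r)/v_k(1)-k\,\tilde v_k(1)\ge v_k(r)$, and then invokes the identity $k\,\tilde v_k(1)/\pi=1-w(k)$ (a consequence of $v_k(r)=\pi r^2-k\,\tilde v_k(r)$ at $r=1$) to see the constant term collapse, giving $\lambda_\chi(k)f(r)+w(k)\,r^2-w(k)^2\le 0$. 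A pure ``Taylor substitution into $\Phi$'' as you describe never sees the denominator and cannot generate this constant: your plan treats $\Phi$ abstractly as a decreasing function, which suffices for the dichotomy but not for the algebra that defines $t_\ast$ in the negative-curvature case. Filling this in is exactly the content of the paper's treatment of $k<0$.
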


$\lambda_\chi(k)$ is a quantity of how much the surface is far from
the space form.
Note in fact that we always have $\lambda_\chi(k) \ge 0$ and
that $\lambda_\chi(k) = 0$ holds
if and only if $M$ has \emph{constant} nonnegative Gaussian curvature
(see Proposition \ref{prop:lambda}),
for which we know the exact range of $V/D^2$ (see Remark \ref{rem}).
Taking the limit of the estimate in the theorem
as $\lambda_\chi(k) \to 0+$ implies
that $V/D^2 \le v_k(1)$ for $\lambda_\chi(k) = 0$, which is optimal
in the case of constant positive curvature.

The right-hand side of the inequality in the theorem
is continuous in $k$ even at $k = 0$.

Letting $\kappa := 0$ in the theorem yields the following.

\begin{cor} \label{cor:VD}
  Let $M$ be a two-dimensional Riemannian manifold
  with nonnegative Gaussian curvature
  such that $M$ is homeomorphic to either a two-sphere $S^2$
  or a real projective plane $\R P^2$.
  Then we have
  \[
  \frac{V}{D^2} \le \frac{\sqrt{9+6\chi} - 3}{\chi} \pi <
  \begin{cases}
   2.486 & \text{if $M \simeq S^2$},\\
   2.743 & \text{if $M \simeq \R P^2$.}
   \end{cases}
  \]
\end{cor}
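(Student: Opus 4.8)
The plan is to specialize Theorem \ref{thm:VD} to the case $\kappa = 0$, so that $k = \kappa D^2 = 0$, and to read off the resulting estimate after a short simplification. Nonnegative curvature guarantees that $\kappa = 0$ is a legitimate lower curvature bound, and for $M \simeq S^2$ or $M \simeq \R P^2$ the Euler number is $\chi = 2$ or $\chi = 1$ respectively; in either case $\lambda_\chi(0) = 2\chi - 0\cdot w(0) = 2\chi > 0$, so the hypothesis $\lambda_\chi(k) > 0$ of Theorem \ref{thm:VD} is satisfied and the theorem applies.

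First I would evaluate the three auxiliary quantities at $k = 0$. From the definitions, $w(0) = 1$, hence $\lambda_\chi(0) = 2\chi$; and since $k = 0$ lies in the branch $k \ge 0$, the term $360^{-1}k\,\lambda_\chi(k)$ vanishes, giving $\alpha_\chi(0) = w(0) = 1$. The key observation is that this second term of $\alpha_\chi$ drops out precisely because of the prefactor $k$, which is what makes the $k = 0$ evaluation clean.

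Next, since $v_0(r) = \pi r^2$, the right-hand side of the theorem is just $\pi$ times the expression under the square root. Substituting the values above yields
\[
\frac{V}{D^2} \le \pi\,\frac{-6 + 6\sqrt{1 + \tfrac{2\chi}{3}}}{2\chi}
= \frac{3\bigl(\sqrt{1+\tfrac{2\chi}{3}}-1\bigr)}{\chi}\,\pi.
\]
The only algebraic point to check is the identity $3\sqrt{1 + \tfrac{2\chi}{3}} = \sqrt{9 + 6\chi}$, which rewrites this as the stated closed form $\dfrac{\sqrt{9+6\chi}-3}{\chi}\,\pi$.

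Finally I would insert the two values of $\chi$: for $\chi = 2$ this gives $\tfrac12(\sqrt{21}-3)\pi$, and for $\chi = 1$ it gives $(\sqrt{15}-3)\pi$; bounding $\sqrt{21}$ and $\sqrt{15}$ from above then produces the numerical estimates $2.486$ and $2.743$. I do not expect any genuine obstacle here, since the entire content is carried by Theorem \ref{thm:VD}; the remaining work is the bookkeeping of the $k = 0$ specialization, the vanishing of the $\alpha_\chi$ correction term, and the elementary square-root simplification above.
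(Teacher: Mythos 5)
Your proposal is correct and is exactly the paper's route: the paper derives the corollary by simply setting $\kappa = 0$ in Theorem \ref{thm:VD}, and your evaluation $w(0)=1$, $\lambda_\chi(0)=2\chi>0$, $\alpha_\chi(0)=1$, together with the simplification $3\sqrt{1+\tfrac{2\chi}{3}}=\sqrt{9+6\chi}$ and the numerical bounds on $\sqrt{21}$ and $\sqrt{15}$, is precisely the intended bookkeeping.
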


In the nonnegative curvature case,
there were a few results for the estimate of $V/D^2$
\cite{CC:clgeod,Sk:isodiam,Sy:diam-area}.
For $M \simeq S^2$,
the best one that I had ever known was $8/\pi \doteqdot 2.548$,
which is due to Calabi and Cao \cite{CC:clgeod}, where their estimate
follows from the first nonzero eigenvalue estimate
by Hirsch \cite{H:isop} and Zhong-Yang \cite{ZY:eigen}.
Our estimate, Corollary \ref{cor:VD}, is better than Calabi-Cao's estimate.
For $M \simeq \R P^2$,
Corollary \ref{cor:VD} is also better than the estimate in
our previous paper \cite{Sy:diam-area}.
The idea of the proof in this paper is different from
any former results.

\section{Estimate by Radius}
\label{sec:radius}

The purpose of this section is to prove the following proposition,
which is the key to the proof of the main theorem.

\begin{prop} \label{prop:VDR}
  With the notations as in Theorem \ref{thm:VD}, we have
  \[
  V \le
  \begin{cases}
  \pi D^2 - \dfrac{2\pi\,\chi\,\tilde v_\kappa(R)}{v_\kappa(D)}
    & \text{if $\kappa \ge 0$}, \\
  \dfrac{\pi D^2 - 2\pi\,\chi\,\tilde v_\kappa(R)/v_\kappa(D)
  - \kappa\,\tilde v_\kappa(D)}
  {1 - \kappa\,\tilde v_\kappa(R)/v_\kappa(D)}
  & \text{if $\kappa < 0$},
  \end{cases}
  \]
  where $R$ denotes the radius of $M$ and
  \begin{align*}
  \tilde v_\kappa(r) &:= \int_0^r \int_0^t v_\kappa(s)\;ds\,dt \\
  &=
  \begin{cases}
    \pi r^4/12 & \text{if $\kappa = 0$,}\\
    \pi \kappa^{-2} (\kappa r^2 + 2\cos(\sqrt{\kappa}\,r) - 2) & \text{if $\kappa > 0$,}\\
    \pi \kappa^{-2} (\kappa r^2 + 2\cosh(\sqrt{-\kappa}\,r) - 2)
    & \text{if $\kappa < 0$.}
  \end{cases}
  \end{align*}
\end{prop}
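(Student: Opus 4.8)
The plan is to analyze the growth of geodesic balls around a \emph{center} of $M$ and to feed the Euler characteristic into the Bishop comparison through Gauss--Bonnet. First I would fix a point $p$ realizing the radius, so that $\rho := d(p,\cdot)$ satisfies $0 \le \rho \le R$ on all of $M$, and introduce the ball-area function $b(t) := \operatorname{area} B(p,t)$, which by the coarea formula satisfies $b(R) = V$ and $b'(t) = \operatorname{length}\{\rho = t\}$ for a.e.\ $t$, with $b(0)=b'(0)=0$. Since every point lies within $R$ of $p$, the equidistant $\{\rho = R\}$ has measure zero, so $b'(R) = 0$; this vanishing at the farthest point is the boundary condition that will eventually produce the diameter term. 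The key structural identity is that the first variation of the length of $\{\rho = t\}$ is the total geodesic curvature of that equidistant, which by Gauss--Bonnet applied to $B(p,t)$ equals $2\pi\chi(B(p,t)) - \int_{B(p,t)} K\,dA$; allowing for the cut locus (whose corner contributions carry a favorable sign), this gives the differential inequality $b''(t) \le 2\pi\chi(B(p,t)) - \int_{B(p,t)} K\,dA$.

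Next I would insert the curvature hypothesis. Writing $\int_{B(p,t)} K\,dA = \kappa\,b(t) + \mu(t)$ with the \emph{curvature excess} $\mu(t) := \int_{B(p,t)}(K-\kappa)\,dA \ge 0$ nondecreasing and $\mu(R) = 2\pi\chi - \kappa V$ (this total excess is exactly the deviation-from-space-form quantity measured by $\lambda_\chi$), and using the topological fact that $\chi(B(p,t)) \le 1$ for $t < R$ (a geodesic ball is connected and does not close up to all of $M$ until the farthest point is absorbed), I obtain the master inequality $b''(t) + \kappa\,b(t) \le 2\pi - \mu(t)$. I would then integrate this twice against the Green kernel of $\tfrac{d^2}{dt^2} + \kappa$ adapted to the conditions $b(0)=b'(0)=0$ and $b'(R)=0$: the homogeneous part reconstitutes $v_\kappa$, the double integral of the constant forcing $2\pi$ yields the $\pi D^2$ main term after the crude replacement $R \le D$, and the forcing $\mu$ integrates into a weighted moment of the form $\int_0^R(\text{kernel})\,\mu$, which is precisely where the double primitive $\tilde v_\kappa$ appears.

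It then remains to bound the excess moment from below. Here I would use the total-mass constraint $\mu(R) = 2\pi\chi - \kappa V$ together with the support constraint $\rho \le R \le D$, estimating the moment by the most unfavorable (most concentrated) admissible distribution of curvature excess and normalizing against the diameter-ball area $v_\kappa(D)$; this is what converts the raw moment into the stated factor $\dfrac{2\pi\chi\,\tilde v_\kappa(R)}{v_\kappa(D)}$. For $\kappa \ge 0$ this closes the argument directly. For $\kappa < 0$ the terms $\kappa\,b$ and $\mu(R) = 2\pi\chi - \kappa V$ reintroduce $V$ on the right-hand side with a negative coefficient; solving the resulting linear inequality for $V$ is what produces the extra summand $-\kappa\,\tilde v_\kappa(D)$ and the denominator $1 - \kappa\,\tilde v_\kappa(R)/v_\kappa(D)$, explaining the case split in the statement.

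I expect two places to be delicate. The first is the Gauss--Bonnet bookkeeping across the cut locus: one must justify $\chi(B(p,t)) \le 1$ for $t<R$ and check that the non-smoothness of $\rho$ and the branching of equidistants contribute to $b''$ only with the sign that preserves the inequality; this requires a careful structural description of the distance function rather than a formal computation. The second, and the genuine crux, is the lower bound on the curvature-excess moment: identifying the extremal curvature distribution compatible with fixed total excess and with $\rho \le R \le D$, and verifying that the $v_\kappa(D)$-normalized extremal value is exactly $2\pi\chi\,\tilde v_\kappa(R)/v_\kappa(D)$. The clean model case (constant curvature, single farthest point) can be checked by hand and fixes the constants; upgrading it to a rigorous estimate valid under arbitrary curvature concentration is where the main effort will lie.
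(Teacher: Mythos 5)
Your first step (the Fiala--Hartman differential inequality $b''(t) \le 2\pi\chi(B(p,t)) - \int_{B(p,t)}K\,dA$ combined with $\chi(B(p,t))\le 1$ for proper subballs) is exactly the paper's starting point, and your double integration of it is equivalent to the paper's. But the step you yourself flag as the crux --- bounding the curvature moment $\int_0^R\!\int_0^t \bigl(\int_{B(p,s)}K\,dA\bigr)\,ds\,dt$ from below by $2\pi\chi\,\tilde v_\kappa(R)/v_\kappa(D)$ using only the total-mass constraint and the support constraint --- is not a delicate point to be worked out later: it is false as stated, and no single-center argument can repair it. The ``most unfavorable admissible distribution'' in your constraint set is all curvature concentrated at distance $\approx R$ from $p$, which makes the moment zero, not $2\pi\chi\,\tilde v_\kappa(R)/v_\kappa(D)$. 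This worst case is realized (up to $\varepsilon$) by actual surfaces: take a capped cylinder of length $L$ and radius $\epsilon \ll L$ with $\kappa=0$, $\chi=2$, and $p$ on the middle circle. All curvature sits in the two caps, at distance $\ge L/2$ from $p$, so the moment is $O(\epsilon^2)$, while $2\pi\chi\,\tilde v_0(R)/v_0(D) = \pi R^4/(3D^2) \approx \pi D^2/48$ stays of order $L^2$. (The proposition itself survives on this example only because of the unused slack $\pi D^2 - \pi R^2$; your proof structure, which estimates the main term and the moment separately, does not.)

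The missing idea, and the actual mechanism in the paper, is to \emph{average over all centers}: integrate the inequality $V \le \iint_{0\le s\le t\le R_p}\bigl(2\pi - \int_{B(p,s)}K(q)\,dq\bigr)\,ds\,dt$ with respect to $p \in M$. This turns the left side into $V^2$ and, after exchanging the order of integration, turns the curvature term into $\iiint K(q)\,V(q,s)\,ds\,dt\,dq$, where $V(q,s)$ is the area of the $s$-ball \emph{around the curvature location} $q$. Curvature can hide far from one center, but not from most centers: Bishop--Gromov gives $V(q,s) \ge \bigl(v_\kappa(s)/v_\kappa(D)\bigr)\,V(q,D) = \bigl(v_\kappa(s)/v_\kappa(D)\bigr)\,V$ since $V(q,D)=V$, and then Gauss--Bonnet over all of $M$ produces $I \ge \bigl(2\pi\chi\,\tilde v_\kappa(R)/v_\kappa(D)\bigr)V$; dividing the resulting quadratic inequality $V^2 \le \pi D^2 V - I$ by $V$ gives the statement. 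This is precisely where the otherwise mysterious normalization by $v_\kappa(D)$ comes from, and it has no analogue in a fixed-center argument. (Two smaller points: your boundary condition $b'(R)=0$ is unjustified --- area-measure-zero level sets can have positive length --- but it is also unnecessary; and for $\kappa<0$ the paper's split $K = (K-\kappa)+\kappa$, with Bishop--Gromov on the excess over $[0,R]$ and Bishop on the constant part over $[0,D]$, matches your sketch but again only works after the averaging step.)
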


As a direct consequence of Proposition \ref{prop:VDR},
we have the following.

\begin{cor} \label{cor:VDR}
  Let $M$ be a two-dimensional closed Riemannian manifold with
  nonnegative Gaussian curvature.  Then we have
  \[
  \frac{V}{D^2} \le \pi \left( 1 - \frac{\chi}{6} \rho^4 \right),
  \]
  where $\rho := R/D$.
\end{cor}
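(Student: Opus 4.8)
The plan is to derive Corollary~\ref{cor:VDR} directly from the $\kappa \ge 0$ case of Proposition~\ref{prop:VDR} by specializing to $\kappa = 0$. The final statement is the nonnegatively-curved case, so I take the first branch of the proposition,
\[
V \le \pi D^2 - \frac{2\pi\,\chi\,\tilde v_\kappa(R)}{v_\kappa(D)},
\]
and substitute the flat-space ($\kappa = 0$) values of the comparison functions. From the formulas given, $v_0(D) = \pi D^2$ and $\tilde v_0(R) = \pi R^4/12$.

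Next I would substitute these closed forms into the bound. The second term becomes
\[
\frac{2\pi\,\chi\,\tilde v_0(R)}{v_0(D)}
= \frac{2\pi\,\chi\,(\pi R^4/12)}{\pi D^2}
= \frac{\pi\,\chi\,R^4}{6\,D^2}.
\]
Dividing the whole inequality through by $D^2$ and writing $\rho := R/D$ gives
\[
\frac{V}{D^2} \le \pi - \frac{\pi\,\chi\,R^4}{6\,D^4}
= \pi\left(1 - \frac{\chi}{6}\,\rho^4\right),
\]
which is exactly the asserted inequality.

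Since this is a direct consequence of an already-established proposition under a single specialization of $\kappa$, there is essentially no main obstacle: the only things to verify are that the flat-space substitutions $v_0(D) = \pi D^2$ and $\tilde v_0(R) = \pi R^4/12$ are correct (both are read off from the definitions in Proposition~\ref{prop:VDR}) and that the algebraic simplification is carried out cleanly. I would double-check that the Euler number $\chi$ enters with the right sign and that no factor of $2$ is lost in cancelling $2\pi/(12)$ against $1/6$. No curvature-sign case analysis is needed here because the corollary already assumes nonnegative curvature, placing us squarely in the first branch of the proposition.
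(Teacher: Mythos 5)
Your proposal is correct and is exactly the paper's (implicit) argument: the paper states Corollary~\ref{cor:VDR} as a direct consequence of Proposition~\ref{prop:VDR}, obtained by taking $\kappa = 0$ as the lower curvature bound and substituting $v_0(D) = \pi D^2$ and $\tilde v_0(R) = \pi R^4/12$ into the first branch, precisely as you do. Your algebra, including the cancellation $2\pi/12 = \pi/6$, checks out.
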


\begin{proof}[Proof of Proposition \ref{prop:VDR}]
We set, for $p \in M$,
\[
R_p := \sup_{q \in M} d(p,q),
\]
where $d$ denotes the Riemannian distance function on $M$.
Note that $R_p$ is continuous in $p \in M$.
Let $B(p,r)$ indicate the $r$-ball centered at $p \in M$
and $\partial B(p,r)$ its boundary.
Denote by $\kappa(p,r)$ the sum of the total geodesic curvature
of $\partial B(p,r)$ and of the exterior angles of $B(p,r)$
at cut points in $\partial B(p,r)$ from $p$,
which is well-defined a.e.~$r \in (\,0,R_p\,)$.
By Fiala-Hartman's discussion (see \cite{F:isop, H:geodpara}
and \cite{SST}*{Theorem 4.4.1}),
the length of $\partial B(p,r)$, say $L(p,r)$,
satisfies
\[
\frac{d}{dr} L(p,r) \le \kappa(p,r)
\]
for a.e.~$r \in (\,0,R_p\,)$.
It follows from the Gauss-Bonnet theorem that
\[
  \kappa(p,r) \le 2\pi - \int_{B(p,r)} K(q) \; dq
\]
for a.e.~$r \in (\,0,R_p\,)$,
because the Euler number of a proper subset of $M$ is at most one,
where $K$ denotes the Gaussian curvature function on $M$
and $dq$ the area (volume) measure with respect to the variable $q \in M$.
Note that $\int_{B(p,r)} K(q) \; dq$ is continuous in $(p,r) \in M \times [\,0,+\infty\,)$.
We therefore have
\[
  V = \int_0^{R_p} L(p,t) \; dt
  \le \iint\limits_{0 \le s \le t \le R_p} \left( 2\pi - \int_{B(p,s)} K(q) \; dq \right) \; ds\,dt,
\]
where we note that the right-hand side is continuous in $p \in M$.
Integrating the both sides of the above inequality with respect to $p \in M$,
we have, by $R_p \le D$,
\begin{equation}  \label{eq:V2}
  V^2
  \le \iiint\limits_{\substack{0 \le s \le t \le D\\ p \in M}} 2\pi \; ds\,dt\,dp
  - I
  = \pi D^2 V - I,
\end{equation}
where
\[
I := \iiiint\limits_{\substack{0 \le s \le t \le R_p\\ d(p,q) < s}}  K(q) \; ds\,dt\,dp\,dq.
\]
We are going to estimate $I$ from below.

We assume $\kappa \ge 0$.
Denote by $V(q,s)$ the area of the ball $B(q,s)$.
Since $K \ge 0$, $R_p \ge R$,
and by the Bishop-Gromov volume comparison theorem, we see that
\begin{align*}
I &\ge \iiiint\limits_{\substack{0 \le s \le t \le R\\ d(p,q) < s}}  K(q) \; ds\,dt\,dp\,dq
= \iiint\limits_{\substack{0 \le s \le t \le R\\ q \in M}}  K(q)\,V(q,s) \; ds\,dt\,dq\\
&\ge \frac{V}{v_\kappa(D)} \iiint\limits_{\substack{0 \le s \le t \le R\\ q \in M}}
K(q)\, v_\kappa(s) \; ds\,dt\,dq
\intertext{and by the Gauss-Bonnet theorem,}
&= \frac{2\pi\chi\, V}{v_\kappa(D)} \iint\limits_{0 \le s \le t \le R} v_\kappa(s) \; ds\,dt
= \frac{2\pi\chi\, \tilde v_\kappa(R)}{v_\kappa(D)}\, V.
\end{align*}
Combining this with \eqref{eq:V2} yields
\[
V^2 \le \pi D^2 V - \frac{2\pi\chi\, \tilde v_\kappa(R)}{v_\kappa(D)}\, V,
\]
which implies
\[
V \le \pi D^2 - \frac{2\pi\chi\, \tilde v_\kappa(R)}{v_\kappa(D)}.
\]

We next assume $\kappa < 0$.
Since $K - \kappa \ge 0$ and $R \le R_p \le D$,
\begin{align*}
I &\ge \iiiint\limits_{\substack{0 \le s \le t \le R\\ d(p,q) < s}}  (K(q)-\kappa)
 \; ds\,dt\,dp\,dq
+ \iiiint\limits_{\substack{0 \le s \le t \le D\\ d(p,q) < s}}  \kappa \; ds\,dt\,dp\,dq\\
&= \iiint\limits_{\substack{0 \le s \le t \le R\\ q\in M}}  (K(q)-\kappa)\,V(q,s)
 \; ds\,dt\,dq
 + \iiint\limits_{\substack{0 \le s \le t \le D\\ q\in M}}  \kappa\,V(q,s) \; ds\,dt\,dq\\
 \intertext{and by the Bishop-Gromov and Bishop inequalities,}
&\ge \frac{V}{v_\kappa(D)} \iiint\limits_{\substack{0 \le s \le t \le R\\ q\in M}} 
(K(q)-\kappa)\,v_\kappa(s) \; ds\,dt\,dq
+ \iiint\limits_{\substack{0 \le s \le t \le D\\ q\in M}}  \kappa\,v_\kappa(s)
\; ds\,dt\,dq\\
&= \frac{\tilde v_\kappa(R)}{v_\kappa(D)} \,(2\pi\chi-\kappa \,V) \,V
+ \kappa\, \tilde v_\kappa(D)\, V,
\end{align*}
which together with \eqref{eq:V2} leads us to the proposition.
This completes the proof.
\end{proof}

\section{Estimate by Diameter} \label{sec:diam}

\begin{prop} \label{prop:lambda}
  With the notations as in Theorem \ref{thm:VD},
  we have
  \[
  \lambda_\chi(k) \ge 0
  \]
  and the equality holds if and only if
  $M$ has constant nonnegative Gaussian curvature.
\end{prop}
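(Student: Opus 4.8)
The plan is to reduce the inequality to a clean comparison between the Euler characteristic and the area of a geodesic ball in the model space, and then to treat the sign of $\kappa$ separately. First I would record the scaling identity $v_\kappa(D) = \pi D^2 w(k)$, which follows at once from the definitions together with $k = \kappa D^2$ (indeed $\pi w(k) = v_k(1) = v_{\kappa D^2}(1)$ and $v_\kappa(D) = D^2 v_{\kappa D^2}(1)$). This gives $k\,w(k) = \kappa\,v_\kappa(D)/\pi$ and hence
\[
\lambda_\chi(k) = 2\chi - \frac{\kappa\,v_\kappa(D)}{\pi},
\]
so that $\lambda_\chi(k) \ge 0$ is equivalent to $2\pi\chi \ge \kappa\,v_\kappa(D)$. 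The three ingredients I would use are the Gauss-Bonnet theorem $2\pi\chi = \int_M K\,dA$, the pointwise bound $K \ge \kappa$ (which integrates to $2\pi\chi \ge \kappa V$), and the Bishop volume comparison theorem $V \le v_\kappa(D)$, valid since $M$ is contained in the closed $D$-ball about any point.

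For $\kappa \le 0$ these combine immediately: multiplying $V \le v_\kappa(D)$ by the nonpositive number $\kappa$ reverses the inequality to $\kappa V \ge \kappa\,v_\kappa(D)$, whence $2\pi\chi \ge \kappa V \ge \kappa\,v_\kappa(D)$. For $\kappa > 0$ this combination fails (both bounds on $V$ now point the same way), and one must use a diameter bound instead. Here $K \ge \kappa > 0$ forces $\chi > 0$ by Gauss-Bonnet, so $M \simeq S^2$ ($\chi = 2$) or $M \simeq \RP^2$ ($\chi = 1$), and the target reads $\cos\sqrt{k} \ge 1 - \chi$, with $\sqrt{k} = \sqrt{\kappa}\,D$. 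For $S^2$ the bound $1 - \chi = -1 \le \cos\sqrt{k}$ is automatic. For $\RP^2$ one needs $\cos\sqrt{k} \ge 0$, i.e.\ the sharp diameter estimate $D \le \pi/(2\sqrt{\kappa})$; this is the one genuinely nontrivial input, and I expect it to be the main obstacle --- the elementary Bonnet-Myers bound $D \le \pi/\sqrt{\kappa}$ on $M$ (or on its orientation double cover $S^2$) only yields the factor-two-too-weak estimate $\cos\sqrt{k} \ge -1$. I would invoke the classical halved-diameter estimate for $\RP^2$ under $K \ge \kappa$, proved via the fixed-point-free isometric involution on the $S^2$-cover together with Toponogov comparison.

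Finally I would read off the equality cases by tracking when each step is sharp. For $\kappa < 0$, equality would force both $K \equiv \kappa$ and Bishop equality $V = v_\kappa(D)$; but the latter requires $M$ to be isometric to a geodesic $D$-ball of the hyperbolic model, which has nonempty boundary and so cannot be a closed surface, whence $\lambda_\chi(k) > 0$ strictly. For $\kappa = 0$ equality means $\chi = 0$, and then $\int_M K\,dA = 0$ with $K \ge 0$ gives $K \equiv 0$. For $\kappa > 0$ equality forces the diameter to be maximal ($\sqrt{\kappa}\,D = \pi$ for $S^2$, $\sqrt{\kappa}\,D = \pi/2$ for $\RP^2$), whence the maximal-diameter rigidity theorem yields constant curvature $\kappa$. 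In every case equality is thus equivalent to $K \equiv \kappa$ with $\kappa \ge 0$, that is, to $M$ having constant nonnegative curvature; the converse is the direct computation that the round $S^2$, the round $\RP^2$, and the flat surfaces realize $\lambda_\chi(k) = 0$.
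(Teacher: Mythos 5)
Your proposal is correct and takes essentially the same route as the paper: for $\kappa\le 0$ it is the identical chain $2\pi\chi=\int_M K\ge\kappa V\ge\kappa\,v_\kappa(D)$ (Gauss--Bonnet, $K\ge\kappa$, Bishop), with the same Bishop-rigidity contradiction ruling out equality when $\kappa<0$, and for $\kappa>0$ it is the same reduction to $\cos\sqrt{k}\ge 1-\chi$, settled by the sharp diameter bound for $\RP^2$ and maximal-diameter rigidity. The only real difference is bibliographic: your ``classical halved-diameter estimate'' for $\RP^2$ and its rigidity are precisely what the paper cites as the Grove--Shiohama diameter sphere theorem (in contrapositive form) and the Gromoll--Grove diameter rigidity theorem; if you did want to prove that bound directly on the double cover, be aware that the free involution plus Toponogov alone do not suffice --- one also needs Bonnet--Myers and the topological fact that two non-contractible loops in $\RP^2$ must intersect.
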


\begin{proof}
We first assume $k > 0$.  Then, $\chi = 2$ (i.e., $M \simeq S^2$)
or $\chi = 1$ (i.e., $M \simeq \R P^2$).
Note that $\lambda_\chi(k) = 2(\chi - 1 + \cos\sqrt{k})$.

In the case where $\chi = 2$,
we have $\lambda_\chi(k) \ge 0$, and the equality holds if and only if $k = \pi^2$,
which is equivalent to $M$ being a two-sphere with positive constant curvature
by the Toponogov maximal diameter theorem \cite{CE}*{Theorem 6.5}.

In the case where $\chi = 1$, we have
$\lambda_\chi(k) \ge 0$ (resp.~$= 0$) if and only if $k \le \pi^2/4$ (resp.~$= \pi^2/4$).
The theorem in this case follows from
the Grove-Shiohama diameter sphere theorem \cite{GS:diamsphere}
and the Gromoll-Grove diameter rigidity theorem \cite{GG:diamrigidity}.
  
We next assume $k \le 0$.
By the Gauss-Bonnet theorem and the Bishop volume comparison theorem,
\[
\pi\lambda_\chi(k) = \int_M K(p)\;dp - \kappa\, v_\kappa(D)
\ge \kappa\, (V - v_\kappa(D)) \ge 0.
\]
If $\lambda_\chi(k) = 0$ and $k < 0$, then $K \equiv \kappa$ and $V = v_\kappa(D)$,
so that
the cut-locus at any point $p$ in $M$ is contained in $\partial B(p,D)$,
which is a contradiction (cf.~\cite{GP:nearbdy}).
Therefore, if $\lambda_\chi(k) = 0$, then $k = 0$, which implies
\[
\int_M K(p)\;dp = 2\pi\chi = \pi\lambda_\chi(k) = 0
\]
and so $M$ has a flat metric.
This completes the proof.
\end{proof}

\begin{rem} \label{rem}
  If a closed two-manifold has constant nonnegative Gaussian curvature, then
  it is isometric to either a two-sphere with a metric of constant positive curvature,
  a real projective plane with a metric of constant positive curvature,
  a two-torus with a flat metric, or a Klein bottle with a flat metric.
  It is easy to see that $V/D^2 \le 2$
  for a closed two-manifold with a flat metric.
  For a two-sphere with a metric of constant curvature,
  $V/D^2 = 4/\pi$.
  For a real projective plane with a metric of constant curvature,
  $V/D^2 = 8/\pi$.
\end{rem}

\begin{proof}[Proof of Theorem \ref{thm:VD}]
Assume $\lambda_\chi(k) > 0$.
Scaling the metric of $M$, we may assume that $D = 1$.
Then, $k = \kappa$ and $1/2 \le R \le 1$.

We first prove the theorem in the case where $k \ge 0$.
Note that $\chi \ge 0$ in this case.
Combining Proposition \ref{prop:VDR} with \eqref{eq:VR} implies
\[
\frac{V}{D^2} \le \sup_{1/2 \le r \le 1}
\min\{\; \pi - 2\pi\chi\,\tilde v_k(r)/v_k(1), v_k(r) \;\}.
\]
By considering the monotonicity of $\pi - 2\pi\chi\,\tilde v_k(r)/v_k(1)$ and $v_k(r)$
in $r$, 
the supremum above is attained at a unique solution to the equation
\begin{equation} \label{eq:r}
\pi - 2\pi\chi\,\tilde v_k(r)/v_k(1) = v_k(r).
\end{equation}
We will estimate the solution $r$ from above.
For simplicity we set
$f(r) := \tilde v_k(r)/\pi$.
Since $w(k) = v_k(1)/\pi$ and $v_k(r)/\pi = r^2 - k\,f(r)$, the equation \eqref{eq:r}
implies
\[
\lambda_\chi(k) f(r) = w(k) - w(k) \,r^2.
\]
It follows that
\[
f(r) \ge \frac{r^4}{12} - \frac{k\,r^6}{360} \ge \frac{r^4}{12} - \frac{k}{360},
\]
so that
\[
\lambda_\chi(k) \left( \frac{r^4}{12} - \frac{k}{360} \right) \le w(k) - w(k) \,r^2,
\]
which implies
\begin{equation} \label{eq:quadeq}
  \frac{1}{12}\lambda_\chi(k) \,r^4 + w(k)\,r^2 - \alpha_\chi(k) \le 0.
\end{equation}
We therefore have
\[
r^2 \le \frac{-6\,w(k) + 6\sqrt{w(k)^2 + 3^{-1} \lambda_\chi(k)\,\alpha_\chi(k)}}
  {\lambda_\chi(k)},
\]
which together with $V \le v_k(r)$ proves the theorem in this case.

We next prove the theorem in the case where $k < 0$.
Since the derivative
\[
\frac{d}{d\tilde v}
\left( \frac{\pi - 2\pi\chi\,\tilde v/v_k(1) - k \,\tilde v_k(1)}{1-k\,\tilde v/v_k(1)} \right)
= - \frac{\lambda_\chi(k)}{(1-k\, \tilde v/v_k(1))^2 \,w(k)}.
\]
is negative, the function
\[
\frac{\pi - 2\pi\chi\,\tilde v_k(r)/v_k(1) - k \,\tilde v_k(1)}{1-k\,\tilde v_k(r)/v_k(1)}
\]
is monotone decreasing in $r$.
In the same way as before, we have $V \le v_k(r)$
for a unique solution $r$ to the equation
\begin{equation} \label{eq:r2}
  \frac{\pi - 2\pi\chi\,\tilde v_k(r)/v_k(1) - k \,\tilde v_k(1)}{1-k\,\tilde v_k(r)/v_k(1)}
  = v_k(r).
\end{equation}
For the solution $r$, we have
\[
\pi - 2\pi\chi\,\tilde v_k(r)/v_k(1) - k \,\tilde v_k(1) \ge v_k(r)
\]
and then
\[
\lambda_\chi(k) f(r) + w(k)\,r^2 - w(k)^2 \le 0,
\]
which together with $f(r) \ge r^4/12$ yields \eqref{eq:quadeq}.
This completes the proof of Theorem \ref{thm:VD}.
\end{proof}

\begin{rem}
  In the case where $k < 0$,
  we have, by \eqref{eq:r2},
  \[
  k^2 f(r)^2 + (\lambda_\chi(k)-k\,r^2)f(r) + w(k)\, r^2 - w(k)^2 = 0,
  \]
  which together with $f(r) \ge r^4/12$ implies a quartic inequality for $r^2$,
  leading to a better and much more complex form of the estimate of $V/D^2$.
\end{rem}

\begin{bibdiv}
\begin{biblist}

\bib{A:conv}{book}{
   author={Alexandrow, A. D.},
   title={Die innere Geometrie der konvexen Fl\"achen},
   language={German},
   publisher={Akademie-Verlag, Berlin},
   date={1955},
   pages={xvii+522},
}

\bib{CC:clgeod}{article}{
   author={Calabi, Eugenio},
   author={Cao, Jian Guo},
   title={Simple closed geodesics on convex surfaces},
   journal={J. Differential Geom.},
   volume={36},
   date={1992},
   number={3},
   pages={517--549},
   issn={0022-040X},
}

\bib{CE}{book}{
   author={Cheeger, Jeff},
   author={Ebin, David G.},
   title={Comparison theorems in Riemannian geometry},
   note={Revised reprint of the 1975 original},
   publisher={AMS Chelsea Publishing, Providence, RI},
   date={2008},
   pages={x+168},
   isbn={978-0-8218-4417-5},
}

\bib{F:isop}{article}{
   author={Fiala, F.},
   title={Le probl\`eme des isop\'erim\`etres sur les surfaces ouvertes \`a
   courbure positive},
   language={French},
   journal={Comment. Math. Helv.},
   volume={13},
   date={1941},
   pages={293--346},
   issn={0010-2571},
}

\bib{FK:isodiam}{article}{
  author={Freitas, Pedro},
  author={Krej\v{c}i\v{r}\'{i}k, David},
  title={Alexandrov's isodiametric conjecture and the cut locus of a surface},
  note={preprint, arXiv:1406.0811v1},
}

\bib{GG:diamrigidity}{article}{
   author={Gromoll, Detlef},
   author={Grove, Karsten},
   title={A generalization of Berger's rigidity theorem for positively
   curved manifolds},
   journal={Ann. Sci. \'Ecole Norm. Sup. (4)},
   volume={20},
   date={1987},
   number={2},
   pages={227--239},
   issn={0012-9593},
}

\bib{GP:nearbdy}{article}{
   author={Grove, Karsten},
   author={Petersen, Peter},
   title={Manifolds near the boundary of existence},
   journal={J. Differential Geom.},
   volume={33},
   date={1991},
   number={2},
   pages={379--394},
   issn={0022-040X},
}

\bib{GP:volcomp}{article}{
   author={Grove, Karsten},
   author={Petersen, Peter, V},
   title={Volume comparison \`a\ la Aleksandrov},
   journal={Acta Math.},
   volume={169},
   date={1992},
   number={1-2},
   pages={131--151},
   issn={0001-5962},
   doi={10.1007/BF02392759},
}

\bib{GS:diamsphere}{article}{
   author={Grove, Karsten},
   author={Shiohama, Katsuhiro},
   title={A generalized sphere theorem},
   journal={Ann. of Math. (2)},
   volume={106},
   date={1977},
   number={2},
   pages={201--211},
}

\bib{H:geodpara}{article}{
   author={Hartman, Philip},
   title={Geodesic parallel coordinates in the large},
   journal={Amer. J. Math.},
   volume={86},
   date={1964},
   pages={705--727},
   issn={0002-9327},
}

\bib{H:isop}{article}{
   author={Hersch, Joseph},
   title={Quatre propri\'et\'es isop\'erim\'etriques de membranes
   sph\'eriques homog\`enes},
   language={French},
   journal={C. R. Acad. Sci. Paris S\'er. A-B},
   volume={270},
   date={1970},
   pages={A1645--A1648},
}

\bib{KM:alexconj}{article}{
  author={Krysiak, James},
  author={McCoy, Zachary},
  title={Alexandrov's conjecture: On the intrinsic diameter and surface area of convex surfaces},
  note={preprint},
}

\bib{Sk:isodiam}{article}{
   author={Sakai, Takashi},
   title={On the isodiametric inequality for the $2$-sphere},
   conference={
      title={Geometry of manifolds},
      address={Matsumoto},
      date={1988},
   },
   book={
      series={Perspect. Math.},
      volume={8},
      publisher={Academic Press, Boston, MA},
   },
   date={1989},
   pages={303--315},
}

\bib{SST}{book}{
   author={Shiohama, Katsuhiro},
   author={Shioya, Takashi},
   author={Tanaka, Minoru},
   title={The geometry of total curvature on complete open surfaces},
   series={Cambridge Tracts in Mathematics},
   volume={159},
   publisher={Cambridge University Press, Cambridge},
   date={2003},
   pages={x+284},
   isbn={0-521-45054-3},
   doi={10.1017/CBO9780511543159},
}

\bib{Sy:diam-area}{article}{
   author={Shioya, Takashi},
   title={Diameter and area estimates for $S^2$ and ${\bf P}^2$ with
   non-negatively curved metrics},
   conference={
      title={Progress in differential geometry},
   },
   book={
      series={Adv. Stud. Pure Math.},
      volume={22},
      publisher={Math. Soc. Japan, Tokyo},
   },
   date={1993},
   pages={309--319},
}

\bib{Z:conj}{article}{
   author={Zalgaller, V. A.},
   title={A conjecture on convex polyhedra},
   language={Russian, with Russian summary},
   journal={Sibirsk. Mat. Zh.},
   volume={50},
   date={2009},
   number={5},
   pages={1070--1082},
   issn={0037-4474},
   translation={
      journal={Sib. Math. J.},
      volume={50},
      date={2009},
      number={5},
      pages={846--855},
      issn={0037-4466},
   },
   doi={10.1007/s11202-009-0095-3},
}

\bib{ZY:eigen}{article}{
   author={Zhong, Jia Qing},
   author={Yang, Hong Cang},
   title={On the estimate of the first eigenvalue of a compact Riemannian
   manifold},
   journal={Sci. Sinica Ser. A},
   volume={27},
   date={1984},
   number={12},
   pages={1265--1273},
   issn={0253-5831},
}

\end{biblist}
\end{bibdiv}

\end{document}